\newtheorem{thm}{Theorem}[section]
\newtheorem{prop}[thm]{Proposition}
\newtheorem{conj}[thm]{Conjecture}
\newtheorem{exa}[thm]{Example}
\newtheorem{rem}[thm]{Remark}
\numberwithin{equation}{section}
\newcommand{\Z}{\mathbb Z}
\newcommand{\CP}{{\mathbb C}{\mathbb P}}
\begin{document}

\title{On negative spheres in elliptic surfaces}

\author{Andr\'{a}s I. Stipsicz}
\address{R\'enyi Institute of Mathematics\\
H-1053 Budapest\\ 
Re\'altanoda utca 13--15, Hungary}
\email{stipsicz.andras@renyi.hu}

\author{Zolt\'an Szab\'o}
\address{Department of Mathematics\\
Princeton University\\
 Princeton, NJ, 08544}
\email{szabo@math.princeton.edu}

\begin{abstract}
  Using elliptic fibrations with specific singular fibers, we find
  spheres with very negative self-intersections in elliptic surfaces
  and in their blow-ups.
\end{abstract}
\maketitle

\section{Introduction}
\label{sec:intro}

Suppose that $X$ is a simply connected, closed, smooth, oriented four-manifold.
As $H_2\cong \pi _2$ in this case, all second homology classes can be
represented by a smooth map $f\colon S^2\to X$, and in this dimension we can
assume that $f$ is an immersion. There are, however, serious restrictions
on the homology class if we demand $f$ to be an embedding, i.e. if
the homology class can be represented by an embedded sphere.
(We will call such a homology class \emph{spherical}.)

The question of which integers appear as self-intersections of
spherical classes has been studied for some time. For example, in
$S^2\times S^2$ (being diffeomorphic to the Hirzebruch surfaces
${\mathbb {F}}_n$ for any even $n$, hence containing spheres of
self-intersections $n$ and $-n$) we have only mild homological
constrains (namely the parity of the self-intersection).  A similar
argument shows spheres in ${\mathbb {CP}}^2\# {\overline {\CP}} ^2$
with arbirtarily odd negative and positive self-intersections, and
similar statements hold for the blow-ups of these manifolds (which are
diffeomorphic to ${\mathbb {CP}}^2\# n{\overline {\CP}} ^2$ with
$n\geq 2$).

If $X$ has nontrivial Seiberg-Witten invariants \cite{Mor, W} and
$b_2^+(X)>1$, then the possibilities are much more restrictive; for
example, the self-intersection of a non-torsion spherical homology
class in such a four-manifold must be negative.  It is an open
question, however, if the negative numbers appearing as
self-intersections of spheres in a four-manifold $X$ with $b_2^+(X)>1$
and non-trivial Seiberg-Witten invariants $SW_X$ form a bounded set.

For the $K3$ surface, the unique smooth four-manifold (up to
diffeomorphism) which is simply connected and admits a complex
structure with $c_1=0$, so far the largest (in absolute value)
negative self-intersection has been found in \cite{FM}: Finashin and
Mikhalkin showed that for any even $k$ between $-2$ and $-86$
(including the two endpoints) there is a spherical class with
self-intersection $k$. (As the $K3$ surface is spin,
self-intersections are even.)  It is still an open question whether
there are spherical classes in the $K3$ surface with self-intersection
$<-86$.

In this note we generalize the result of \cite{FM} to simply connected
elliptic surface which admit a section and have $b_2^+>1$ (these
manifolds are usually denoted by $E(n)$ with $n>1$, and in this
notation $K3$ is simply $E(2)$, cf. \cite{FrMo, GS}). We also extend
results to blow-ups of these elliptic surfaces.

\begin{thm} \label{thm:negspheres}
  The four-manifold $E(n)$ with $n\geq 2$ contains a sphere with
  self-intersection
  \begin{equation}\label{ed:sinter}
  s(n)= -44.2\cdot n + 0.8 \cdot (5-r)
  \end{equation}
  where $r\in \{ 0, 1,2,3, 4\}$ is the residue of $n$ mod 5.
\end{thm}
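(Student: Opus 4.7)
My approach is to exploit the fiber-sum decomposition $E(n) = E(1) \#_F E(1) \#_F \cdots \#_F E(1)$ into $n$ copies of $E(1)$, and to construct the desired sphere in a block-by-block fashion. Each $E(1)$ admits elliptic fibrations with a wide variety of singular-fiber configurations, so one may install different templates in different blocks without altering the diffeomorphism type of $E(n)$ (cf.\ \cite{GS}).

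Writing $n = 5m + r$ with $r \in \{0,1,2,3,4\}$, the plan is to build $\Sigma$ by starting from a section $S_0$ of the fibration (so $S_0^2 = -n$) and piping $S_0$, inside each $E(1)$-block, with an integer linear combination $\sum_i a_i C_i$ of fiber components from the singular fibers chosen for that block. Two templates will be employed: a ``type-A'' block, contributing $-45$ to the final self-intersection, and a ``type-B'' block, contributing $-41$; the construction uses $4m + r$ copies of the former and $m$ copies of the latter (note that $4m+r+m=n$). In each block one arranges an $II^{*}$ fiber (whose nine components form an $\widetilde{E_8}$ configuration of $(-2)$-spheres) together with an auxiliary reducible fiber, chooses the coefficients $a_i$ to minimize the quadratic form determined by the $\widetilde{E_8}$ Cartan matrix, and resolves the positive double points that arise (when $a_i \geq 2$) by piping into $(-2)$-spheres of the auxiliary fiber, in the spirit of Finashin--Mikhalkin \cite{FM}. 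Summing the local contributions produces a sphere with
\[
    \Sigma^2 = -45(4m+r) - 41 m + 4 = -221 m - 45 r + 4 = -44.2\, n + 0.8\,(5-r),
\]
the $+4$ term recording the cross-term $2(S_0 \cdot \sum_i a_i C_i)$ against the initial $-n$ self-intersection of $S_0$.

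The main obstacle is verifying that this homological recipe is realized by a genuinely embedded sphere rather than an immersed one. Each piping at a transverse intersection of $S_0$ with a single fiber component is unproblematic, but when $a_i \geq 2$ one needs $a_i$ parallel copies of $C_i$, and parallel copies of a $(-2)$-sphere cannot be made disjoint. Resolving each resulting positive double point consumes a $(-2)$-sphere of the auxiliary fiber, so the bookkeeping of how many such spheres are available versus how many double points appear must match exactly; this matching is what forces the grouping of $E(1)$-blocks in units of five and produces the mod-$5$ correction $0.8(5-r)$. Once the two templates are set up so that every double point is absorbed locally, the fiber-sum gluing is transparent because every piping and every resolution is performed in the interior of a single $E(1)$-block, away from the gluing tori. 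Checking the compatibility of the $4m+r$ type-A and $m$ type-B templates, and producing an explicit choice of coefficients $a_i$ in each, is the delicate combinatorial heart of the argument.
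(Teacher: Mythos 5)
Your proposal does not reach a complete proof, and the strategy it outlines has a structural problem. The paper's argument is global: it rewrites the total monodromy $(ab)^{6n}=(ab)^{30k+6r}$ as $((ab)^5)^{6k}(ab)^{6r}$, i.e.\ it exploits the relation $(ab)^6=1$ to concentrate the monodromy into $6k$ fibers of type $\widetilde{E}_8$ plus a few $\widetilde{E}_6$- and $I_0^*$-fibers accounting for $(ab)^{6r}$. That is a density of $6/5$ $\widetilde{E}_8$-fibers per $E(1)$-summand, which cannot be realized block-by-block: a single $E(1)$ carries monodromy $(ab)^6$, hence contains at most one $\widetilde{E}_8$-fiber, and the best configurations of singular fibers confined to one block whose components are embedded spheres forming a tree (for instance two $I_0^*$-fibers from $(ab)^3(ab)^3$) contribute much less than the $-45$ you assign to a type-A block. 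You never exhibit a template inside one $E(1)$ realizing $-45$ (resp.\ $-41$); those numbers appear to be reverse-engineered from the target formula. By insisting that every operation take place ``in the interior of a single $E(1)$-block, away from the gluing tori,'' you forfeit exactly the global redistribution of monodromy that makes the theorem true.

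The second gap is the passage from the homology class $[S_0]+\sum_i a_i[C_i]$ to an embedded sphere. Parallel copies of a $(-2)$-sphere $C_i$ meet each other in two points, and smoothing both intersection points between the same pair of components raises the genus; you do not explain how ``piping into $(-2)$-spheres of the auxiliary fiber'' absorbs these, which coefficients $a_i$ are chosen, or why the supply of auxiliary spheres matches the number of double points---indeed you flag this as ``the delicate combinatorial heart'' and leave it unverified, so the argument is incomplete even if the block contributions were granted. The paper avoids multiplicities altogether: the section together with all components of the chosen singular fibers is a tree of embedded spheres; a tree is two-colorable, so the spheres can be oriented to make every intersection negative, and smoothing all intersection points of such a tree on $V$ vertices yields one embedded sphere of square $\sum_i [S_i]^2-2(V-1)$. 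That single computation is the whole proof. (As a side remark, for $r=0$ this construction actually produces self-intersection $-221k$, i.e.\ $4$ below the stated $s(5k)$; your bookkeeping reproduces the stated formula but not any construction.)
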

This result recovers the Finashin-Mikhalkin example for $n=2$ and
generalizes it to the further $E(n)$'s.

The above examples can be modified and adapted in the blown-up
elliptic surfaces.  In particular, we have

\begin{thm}\label{thm:negsphereinblowup}
  Let $X_{n,k}$ denote the $k$-fold blow-up of the elliptic surface
  $E(n)$ (once again, with $n\geq 2$). 
  Then $X_{n,k}$ contains a sphere of self-intersection $s(n)-5k$.
  \end{thm}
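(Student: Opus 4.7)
My strategy is to obtain the required sphere in $X_{n,k}$ from the sphere $S\subset E(n)$ of Theorem~\ref{thm:negspheres} by a controlled sequence of $k$ modifications, each of which lowers the self-intersection by exactly $5$. The cleanest mechanism, which I would try first, is tubing against a $(-5)$-sphere: if $C\subset Y$ is a smoothly embedded $(-4)$-sphere disjoint from an embedded sphere $\Sigma\subset Y$, then blowing up $Y$ at a smooth point of $C$ produces a new four-manifold in which $\Sigma$ persists and the proper transform $\widetilde{C}$ is a $(-5)$-sphere disjoint from $\Sigma$; tubing $\Sigma$ with $\widetilde{C}$ along an embedded arc yields an embedded sphere in the class $[\Sigma]+[\widetilde{C}]$ of self-intersection $\Sigma^2-5$. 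Iterating $k$ times, starting with $\Sigma=S$ and using $k$ pairwise disjoint $(-4)$-spheres $C_1,\dots,C_k\subset E(n)$, all disjoint from $S$, produces the desired embedded sphere in $X_{n,k}$ of self-intersection $s(n)-5k$.

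The crux of this plan is exhibiting the auxiliary $(-4)$-spheres in $E(n)$. Since $E(n)$ has no smoothly embedded $(-1)$-spheres for $n\geq 2$, these cannot come from blowing down; instead they must be constructed by topological surgery within the elliptic fibration. The abstract indicates that Theorem~\ref{thm:negspheres} is itself proved by such an argument, using specific singular fibers. I would therefore re-examine the construction of $S$, arrange for it to be built inside a bounded ``reserved'' portion of $E(n)$ (for instance a regular neighborhood of a fixed collection of singular fibers together with a section), and then extract the $(-4)$-spheres from singular fibers lying outside this reserved region. Since one can always arrange for an arbitrary number of additional singular fibers in the fibration of $E(n)$ (for example $I_1$ nodal fibers via a generic elliptic pencil), room for $k$ independent constructions is not an issue.

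The main obstacle is the last local step: producing an embedded $(-4)$-sphere inside (a neighborhood of) a prescribed singular fiber, disjoint from everything already constructed. For a singular fiber of type $I_m$ with $m$ sufficiently large, one looks for an integer combination of its $(-2)$-sphere components which is representable by an embedded sphere of self-intersection $-4$ after tubing; other fiber types admit analogous local constructions. If this local step turns out to be delicate, the fall-back is to prove Theorem~\ref{thm:negsphereinblowup} by induction on $k$ with a strengthened inductive hypothesis asserting both the existence of a sphere of self-intersection $s(n)-5k$ in $X_{n,k}$ \emph{and} the existence of an auxiliary disjoint $(-4)$-sphere in its complement, thereby folding the production of the auxiliary spheres into the iteration itself.
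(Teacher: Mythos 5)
Your reduction hinges on producing $k$ pairwise disjoint $(-4)$-spheres in $E(n)$, each disjoint from the $s(n)$-sphere $S$, and this — which you correctly flag as the crux — is exactly what is missing, and what I do not believe can be supplied along the lines you sketch. First, there is no room for the ``reserved region'': the total monodromy of the fibration is $(ab)^{6n}$ (equivalently, the Euler characteristic $12n$ is fixed), and the decomposition into $\widetilde{E}_8$, $\widetilde{E}_6$ and $I_0^*$ fibers that realizes $s(n)$ exhausts it entirely, so every remaining fiber is a smooth torus. Your claim that ``one can always arrange for an arbitrary number of additional singular fibers'' is false; reserving monodromy for auxiliary $I_m$ fibers forces you to degrade $s(n)$. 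Second, the proposed local construction has a parity obstruction: resolving a tree of $V$ transversely intersecting $(-2)$-spheres (such as a sub-chain of an $I_m$ fiber) yields a sphere of square $-2V+2\sum\epsilon_i\equiv 2\pmod 4$, never $-4$; you would instead need to tube two \emph{disjoint} $(-2)$-sphere components. Third, and most seriously, the theorem is asserted for every $k$, while pairwise disjoint embedded spheres of nonzero square are linearly independent in $H_2$, so $E(n)$ contains at most $b_2(E(n))=12n-2$ pairwise disjoint $(-4)$-spheres; your iteration therefore cannot run for large $k$ unless the later auxiliary spheres are manufactured inside the partial blow-ups, and the obvious candidate there (the conic class $2e$ in a new $\overline{\mathbb{CP}}{}^2$ summand) costs an extra blow-up to become a $(-5)$-sphere, dropping the rate to $-5$ per two blow-ups — worse than the trivial $-4$ per blow-up. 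The fallback induction does not help, since its inductive step presupposes the same unconstructed auxiliary sphere.

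The paper avoids all of this by never leaving the configuration that defines $S$. The $s(n)$-sphere is the resolution of a tree of embedded spheres; leaving one intersection point unresolved exhibits two transversely intersecting spheres with squares $x,y$ satisfying $x+y-2=s(n)$. Proposition~\ref{prop:twosphereblow} then blows up that intersection point, producing a linear chain with squares $x-1,\,-1,\,y-1$, and each subsequent blow-up of an \emph{edge} of the chain inserts a new $(-1)$-vertex, lowers the two adjacent squares by one, and adds one edge (hence one more $-2$ from a negative intersection), for a net cost of exactly $-5$; resolving at the end gives $s(n)-5k$. This needs no auxiliary spheres, no extra singular fibers, and works for every $k$ because the chain always has an edge left to blow up. I recommend you rebuild your argument around blowing up the intersection points already present in the tree rather than importing disjoint negative spheres.
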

In some cases we find spheres in $X_{n,k}$ with self-intersection less
than the value given in the above theorem (see the examples at the end
of Section~\ref{sec:blowups}). Examining the examples we have found,
we arrived to the following conjecture:

\begin{conj}\label{conj:mostnegsphere}
  There is a universal constant $C$ with the following property.
  For a closed, oriented four-manifold $X$ with $b_2^+(X)>1$ and
  with nontrivial   Seiberg-Witten invariant $SW_X$, and 
  $S\subset X$  smoothly embedded sphere in $X$, 
  the self-intersection $[S]^2$ satisfies the inequality
  \[
    [S]^2\geq C\cdot b_2(X).
    \]
   \end{conj}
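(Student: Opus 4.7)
The conjecture is open, so what follows is a proposed line of attack rather than a proof. The natural framework is Seiberg-Witten theory together with structural information about the set of basic classes of $X$.

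The starting point is the generalized adjunction inequality: for any basic class $K$ of $X$ and any smoothly embedded surface $\Sigma$ of genus $g$ with $[\Sigma]$ non-zero in $H_2(X;\bfz)$,
\[
2g - 2 \geq [\Sigma]^2 + |K \cdot [\Sigma]|.
\]
Applied to an embedded sphere $S$ with $[S]^2 = -m$ (and $m \geq 2$) this yields $|K \cdot [S]| \leq m - 2$ for every basic class $K$. The cases $m = 0$ and $m = 1$ can be disposed of separately: the inequality $[S]^2 \geq C \cdot b_2(X)$ is automatic for any negative constant $C$ as soon as $[S]^2 \geq -1$, so the real work lies in $m \geq 2$.

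The first substantive step is to turn these pointwise bounds into global information about $[S]$. The inequality $|K \cdot [S]| \leq m - 2$ says that $[S]$ pairs almost trivially with every basic class. If the $\bfq$-span $V\subset H^2(X;\bfq)$ of the basic classes has rank $d$, then the linear functional $\langle [S], \cdot \rangle$ restricted to $V$ has size controlled by $m$ and a finite generating set, and the self-intersection $[S]^2$ is controlled by how $[S]$ decomposes along $V$ and its orthogonal complement $V^{\perp}$ in the intersection form. Combined with the $\bfz$-lattice structure of $H_2(X;\bfz)$ and, if available, constraints such as a $10/8$-type inequality, one would aim for a bound of the shape $-[S]^2 = m \leq C' \cdot b_2(X)$.

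The main obstacle is twofold. First, the adjunction bound $|K \cdot [S]| \leq m - 2$ becomes vacuous as $m$ grows, so adjunction alone cannot force an upper bound on $m$; genuinely new input is required to bound $m$ from above. Second, one needs a universal relation between $\dim V$ and $b_2(X)$: for the argument above to yield a linear bound in $b_2$, the basic classes must span a subspace whose rank is proportional to $b_2(X)$. In the elliptic surfaces $E(n)$ and their blow-ups from Theorem~\ref{thm:negspheres} and Theorem~\ref{thm:negsphereinblowup} the basic classes are completely understood (multiples of the fiber class, combined with signs on the exceptional classes), and the linear growth can be checked directly; but in general no such structural result is known. A proof in full generality will therefore, I expect, require either a finer invariant such as the Bauer-Furuta stable cohomotopy refinement, a new structural theorem about the spanning of the set of basic classes, or a purely geometric argument controlling how negative an embedded sphere can be in terms of the complexity of $X$.
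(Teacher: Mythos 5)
You have correctly identified that this statement is a \emph{conjecture}: the paper offers no proof of it, only supporting evidence in the form of the explicit constructions of Theorems~\ref{thm:negspheres} and \ref{thm:negsphereinblowup} and the examples of Section~\ref{sec:blowups}, all of which satisfy the conjectured bound (with the candidate value $C=-5$). There is therefore no proof in the paper to compare your attempt against, and your decision to present a line of attack rather than claim a proof is the right call.

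As for the substance of your proposal: the reduction to $m\geq 2$ and the adjunction bound $|K\cdot[S]|\leq m-2$ for every basic class $K$ are correct (with the standard caveat that the adjunction inequality for negatively self-intersecting surfaces requires $[S]$ to be non-torsion; the torsion case is trivial here since then $[S]^2=0$). But you have accurately diagnosed why this does not close: the bound $|K\cdot[S]|\leq m-2$ \emph{weakens} as $m$ grows, so adjunction by itself can never produce an upper bound on $m$, and there is no known structural theorem relating the rank of the span of the basic classes to $b_2(X)$ in general. In other words, the two obstacles you name are exactly the reason the statement remains a conjecture; your proposal is a reasonable framing of the problem but contains no new idea that would overcome them. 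One small additional remark: since the paper's evidence is entirely constructive (lower bounds on how negative spheres can be, coming from plumbing trees of singular fibers), the conjecture is really an assertion that these constructions are close to optimal, and any proof would have to come from the obstruction side, as you suggest --- Bauer--Furuta, a structural result on basic classes, or something genuinely new.
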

The best candidate for $C$ so far (based on examples in elliptic
surfaces and their blow-ups) is $C= -5$. Note that by \cite{taubes}
the condition $SW_X\neq 0$ on the Seiberg-Witten invariants is
satisfied by all symplectic four-manifolds with $b_2^+>1$.
    
\begin{rem}
  We can also consider locally flat maps $f\colon S^2\to X$, for which
  the situation is drastically different: as we disregard the smooth
  structure of $X$ in this way, it is not hard to see that once $X$ is
  indefinite, we can have arbitrarily large (in absolute value)
  positive and negative self-intersections of spheres. The argument
  rests on the fact that both $S^2$-bundles over $S^2$ ($S^2\times
  S^2$ and $\CP ^2 \# {\overline {\CP}}^2$) contain spheres of
  arbitrarily large positive and negative self-intersections, and
  indefinite simply connected topological manifolds admit one of the
  above bundles as direct summands. The question still remains
  interesting for definite four-manifolds, though. See
  \cite{ballinger} for self-intersections of smooth spheres in the
  definite smooth four-manifolds $\# n{\mathbb {CP}}^2$.
  \end{rem}

The question of minimal self-intersections of spheres can be extended
to arbitrary genus: fix a genus $g$ and consider those homology
classes in the closed, oriented, smooth four-manifold $X$ with
$b_2^+(X)>1$ (and with non-trivial $SW_X$) which can be represented by
a smoothly embedded genus-$g$ surface. The adjunction inequality
provides an upper bound for the self-intersection of such surfaces;
we did not find examples of arbitrary negative self-intersections
(once again, with fixed $g$).

{\bf {Acknowledgements}}: AS was partially supported by the
\emph{\'Elvonal (Frontier) grant} KKP126683 of the NKFIH.  ZSz was
partially supported by NSF Grant DMS-1904628. We would like to thank
William Ballinger for enlightening discussions.

\section{Ellitpic fibrations and monodromies}
\label{sec:elliptics}
Suppose that $X$ is a compact complex surface and $f\colon X\to
{\mathbb {CP}}^1$ is a holomorphic map.  The map $f$ is an
\emph{elliptic fibration} if for a generic $t\in {\mathbb {CP}}^1$ the
fiber $f^{-1}(t)$ over $t$ is an elliptic curve (topologically a
two-dimensional torus).  A \emph{section} of an elliptic fibration is
a map $p\colon {\mathbb {CP}}^1 \to X$ satisfying $f\circ p=id_{{\mathbb
    {CP}}^1}$.  A complex surface is an elliptic surface if it admits
an elliptic fibration.

Singular fibers in an elliptic fibration were classified by Kodaira
(see~\cite{HKK}). In the following we will not need all possible
singular elliptic fibers; we restrict our attention to those which
will be useful in proving our main results. Indeed, the singular
fibers ${\widetilde {E}}_j$ (with $j=6,7,8$) and the singular fiber
$I^*_0$ can be given by the plumbing diagrams of
Figure~\ref{fig:Singfibers}.

\begin{figure}
\centering
\includegraphics[width=12cm]{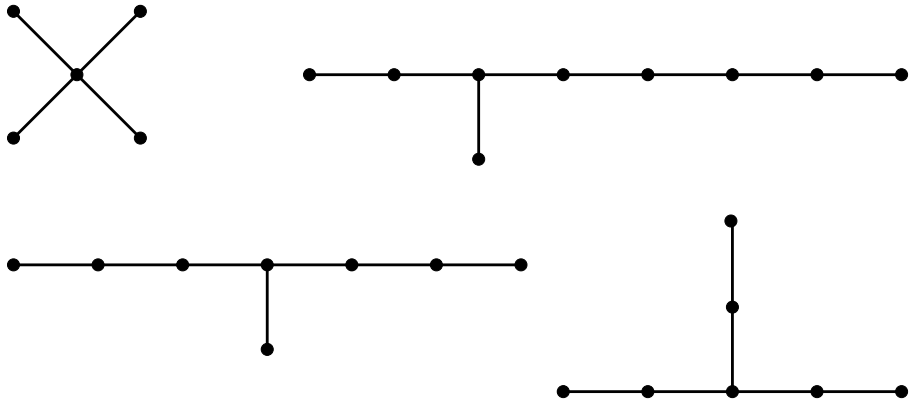}
\caption{ {\bf {The plumbing graphs of the singular fibers $I_0^*$ (upper left)
      and ${\widetilde {E}}_j$ (with $j+1$ vertices) for $j=6,7,8$
      in an elliptic fibration.}} In the plumbing graphs all vertices
  represent spheres and come with self-intersection $-2$. The monodromies are: $(ab)^3$ (for $I_o^*$),
  $(ab)^5$ (for ${\widetilde {E}}_8$), $(ab)^4a$ (for ${\widetilde {E}}_7$) and
$(ab)^4$ (for ${\widetilde {E}}_6$).}
\label{fig:Singfibers} 
\end{figure}

Elliptic surfaces were classified by Kodaira. Simply connected
elliptic surface admitting a section can be constructed as follows:
consider two cubic curves in the complex projective plane ${\mathbb
  {CP}}^2$ so that every intersection point is a smooth point of at
least one of the curves. Consider the pencil generated by these
curves. (If the two curves are given as zero sets of the cubic
polynomials $p_0$ and $p_1$, then the pencil is the family of curves
defined by the cubic polynomials $p_t=t_0p_0+t_1p_1$ with the
projective parameter $t=[t_0:t_1]\in {\mathbb {CP}}^1$.)  The
nine-fold (possibly infinitely close) blow-up of the pencil on
${\mathbb {CP}}^2$ provides an elliptic fibration on $E(1)={\mathbb
  {CP}}^2 \# 9{\overline {\mathbb {CP}}}^2$. Taking $n$-fold fiber
sums of this surface with itself, we get $E(n)$, and up to complex
deformation (according to Kodaira's classification) in this way we get
all simply connected elliptic surfaces admitting sections,
cf. also~\cite{GS}.  This construction does not provide all the
elliptic \emph{fibrations} on these smooth four-manifolds, though: not
every fibration on $E(n)$ can be described as a fiber sum of
fibrations on $E(1)$.

Consider a disk in ${\mathbb {CP}}^1$ containing some points with the
property that their inverse image is not a regular fiber (those are
called singular fibers). Assume furthermore that the boundary of the
disk does not contain any such points. Then the restriction of the
fibration to this boundary circle is a torus bundle over the circle,
which can be desribed by its monodromy, an element of the mapping
class group of the torus, isomorphic to the group ${\rm {SL}}_2 (\Z
)$. The monodromy of the simplest singular fiber (coming from a nodal curve)
corresponds to a right-handed Dehn twist along a simple closed curve
in the torus, while more complicated singular fibers (or more singular
fibers in a disk) give rise to more complicated words in 
${\rm {SL}}_2 (\Z )$.

The group ${\rm {SL}}_2 (\Z )$ admits a presentation as
\[
\langle a, b \mid aba=bab, (ab)^6=1\rangle ,
\]
where $a$ and $b$ can be chosen to be right-handed Dehn twists along
two simple closed curves in the torus intersecting each other
transversely in a single point.  Furthermore, the ${\widetilde
  {E}}_8$ fiber has monodromy $(ab)^5$, the ${\widetilde {E}}_6$ fiber
has monodromy $(ab)^4$, while the $I^*_0$ fiber has monodromy
$(ab)^3$. (More on these singular fibers see \cite[pages~35-36]{HKK}.)

For a generic choice of cubic curves, the blow-up of the corresponding
pencil provides the monodromy presentation $(ab)^6$ on $E(1)$.  For
$E(n)$ the corresponding presentation is $(ab)^{6n}$.

\begin{proof}[Proof of Theorem~\ref{thm:negspheres}]
  Suppose that $n=5k+r$, with $r\in \{ 0, \ldots , 4\}$.
  Then the monodromy $(ab)^{6n}$ is equal to $(ab)^{30k+6r}=
  ((ab)^5)^{6k}(ab)^{6r}$. Write $6r$ as
  \begin{itemize}
  \item 0 when $r=0$
  \item $3+3$ when $r=1$
  \item $5+4+3$ when $r=2$
  \item $5+5+5+3$ when $r=3$, and
  \item $5+5+5+5+4$ when  $r=4$.
  \end{itemize}
  With these decompositions, the monodromy presentation equips $E(n)$
  with a fibration containing $6k$ ${\widetilde {E}}_8$-fibers and
  some possible further ${\widetilde {E}}_8$-fibers (for $r=2,3,4$,
  corresponding to the 5's in the decomposition of $6r$) one or two
  $I^*_0$ fibers (corresponding to the 3's in the decomposition of
  $6r$) and possibly one further ${\widetilde {E}}_6$-fiber (for
  $r=2,4$). Together with a section of the fibration (which is a
  sphere of self-intersection $-n$, intersecting each fiber in a
  single point) these fibers give rise to a configuration of
  $(-2)$spheres (with the exception of the section, which has
  self-intersection $-n$) intersecting each other transversely
  according to a tree. Orienting them so that all intersections are
  negative (which is possible, as a tree is two-colorable), and then
  smoothing the intersections, we get a sphere with the desired
  self-intersection, concluding the proof.
\end{proof}
\begin{rem}
This construction essentially generalizes the proof given for the $K3$
surface in \cite{FM}, although they decomposed $12$ as $4+4+4$ (and
not as $5+4+3$ as above), receiving a more symmetric diagram in
\cite[Figure~3]{FM}.
\end{rem}

\section{Blown up elliptic surfaces}
\label{sec:blowups}
Suppose that the four-manifold $X$ contains a sphere of
self-intersection $-m$. As ${\overline {\CP}}^2$ contains a
$(-4)$-sphere, we can easily find in the $k$-fold blow-up $X\#
k{\overline {\CP}}^2$ a sphere of self-intersection $-m-4k$ by
tubing the spheres in the components of the connected sum decomposition
together.

In some cases, however,  this construction can be significantly improved.

\begin{prop}\label{prop:twosphereblow}
  Suppose that $X$ contains two spheres transversely intersecting each
  other in a single point, and having self-intersections $x$ and $y$.
  Then in the $k$-fold blow-up $X\# k{\overline {\mathbb {CP}}}^2$
  there is an embedded sphere with self-intersection $x+y-2-5k$.
\end{prop}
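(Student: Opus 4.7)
The plan is to argue by induction on $k$, proving the stronger statement that in $X \# k\overline{\CP}^2$ there is a pair of embedded spheres meeting transversely in a single point whose self-intersections sum to $x+y-5k$. Granting this, smoothing the single intersection with the appropriate orientation gives a sphere of self-intersection $x+y-2-5k$, which is the proposition. The base case $k=0$ is immediate: the given $S_1,S_2$ are already such a pair (and smoothing, after reversing the orientation of one of them so the intersection is negative, recovers the connected sphere of self-intersection $x+y-2$).

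For the inductive step, suppose in $X \# (k-1)\overline{\CP}^2$ we have two spheres $A,B$ meeting transversely in one point, with $A^2+B^2=x+y-5(k-1)$. Blow up this intersection point. In the resulting $X \# k\overline{\CP}^2$, the proper transforms $\tilde A,\tilde B$ become disjoint spheres of self-intersections $A^2-1$ and $B^2-1$, and each meets the exceptional $(-1)$-sphere $E$ transversely in a single point — two distinct points on $E$, since $A$ and $B$ were transverse at the blown-up point. Thus $\tilde A, E,\tilde B$ form a chain of three spheres with only nearest-neighbor intersections. Now resolve only the intersection $E\cap \tilde B$, choosing the orientation on $E$ so that the resulting embedded sphere $B'$ represents the class $[\tilde B]-[E]$; its self-intersection is $(B^2-1)+(-1)-2=B^2-4$. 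Because the resolution is performed in a small neighborhood of $E\cap \tilde B$ disjoint from $\tilde A\cap E$, the sphere $B'$ still meets $\tilde A$ transversely in the single point $\tilde A\cap E$. So $\tilde A$ and $B'$ form the desired pair, with self-intersections summing to $(A^2-1)+(B^2-4)=x+y-5k$.

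The construction is essentially forced once one sees that a single blow-up accomplishes a ``$-5$ trade'' via this chain trick (reduce one sphere by $1$, absorb the exceptional curve into the other with a $-2$ smoothing correction), rather than the naive ``$-4$ trade'' obtained by simply tubing a sphere to an exceptional divisor. The only subtle point is the orientation bookkeeping: at every inductive step one must orient the exceptional sphere so that the partial smoothing decreases self-intersection by $4$ rather than leaving it unchanged, and one must verify that the resulting surface is a connected embedded sphere (which it is, since smoothing a single transverse intersection of two spheres always gives a sphere). Both are immediate, but form the main bookkeeping of the argument.
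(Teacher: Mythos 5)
Your proof is correct and follows essentially the same route as the paper: blow up an intersection point (costing $3$ in total self-intersection) and smooth the extra edge this creates (costing $2$ more), for a net loss of $5$ per blow-up. The only difference is packaging --- you organize this as an induction with a partial smoothing at each stage, while the paper performs all $k$ blow-ups on edges of the resulting linear plumbing graph first and smooths every intersection at the end; the count is identical.
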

\begin{proof}
  Indeed, consider the two spheres in $X$ and blow up their (unique)
  intersection point. Together with the exceptional divisor we get now
  three embedded spheres of self-intersections $x-1$, $-1$ and $y-1$,
  intersecting each other along a linear graph. Blowing up edges of
  this graph (or, phrased differently, intersection points of the spheres),
  smoothing the intersection points (with appropriate orientations),
  a simple count verifies the claim.
  \end{proof}

\begin{proof}[Proof of Theorem~\ref{thm:negsphereinblowup}]
  In an elliptic surface we construct the sphere of self-intersection
  $s(n)$ by finding a tree of embedded spheres, and then smoothing
  their intersection points. By skipping one smoothing we get the
  configuration of two transversely intersecting spheres as demanded
  by Proposition~\ref{prop:twosphereblow}, hence the application of
  the proposition concludes the proof.
\end{proof}

For the blown-up elliptic surfaces we can use other singular fibers to
blow up, providing further negative spheres.  Indeed, a cups fiber
(also called type II fiber in the table of \cite[page 35]{HKK}) is a
singular sphere in an elliptic fibration, which admits one singular
point modelled by the cone on the trefoil knot.  The monodromy of a
cusp fiber is equal to $ab$.  We get a configuration of smooth curves
intersecting each other transversally (and without triple
intersections) after blowing up the cusp point three times.  In these
blow-ups the fiber will become a smooth sphere of self-intersection
$(-6)$, connected to a sphere of self-intersection $(-1)$, which is
connected to a $(-2)$- and a $(-3)$-sphere.  In a similar way, we can
use a type III fiber (with monodromy $aba$), which consists of two
$(-2)$-curves tangent to each other.  After two blow-ups this fiber
transforms to a configuration of a $(-1)$-sphere, intersected by two
$(-4)$-spheres and a $(-2)$-sphere. Sometimes a type IV fiber provides
the best result; this fiber consists of three $(-2)$-spheres passing
thorugh the same point, and has monodromy $(ab)^2$. Blowing up the
common intersection of the three spheres once, we get a configuration
of four spheres, a central $(-1)$-sphere intersected by three $(-3)$-spheres.

Another way to get an embedded sphere proceeds as follows. Suppose
that in the elliptic surface $E(n)$ we have an embedded sphere
transversely intersecting a cusp fiber in a single point. The
neighbourhood $N_c$ of the cusp point is modelled by the cone on the
trefoil knot. The same local model occurs near the singular point of
the cuspidal cubic curve
\[
C=\{ [x:y:z]\in {\mathbb {CP}}^2\mid x^3=y^2z\} .
\]
Therefore we can glue $E(n)\setminus N_c$ with the complement of the
similar neighbourhood of the cusp point of $C$ --- as we use an
orientation reversing diffeomorphism between the boundaries, we find a
sphere in $E(n)\# {\overline {\mathbb {CP}}}^2$, the blow-up of
$E(n)$. The resulting sphere has self-intersection $-9$, still
transversely intersecting the sphere in $E(n)$ in a unique point.

For $X_{n,k}=E(n)\# k{\overline {\mathbb {CP}}}^2$ we can follow a
mixed strategy of constructing spheres: start with the monodromy
presentation of $E(n)$ as $(ab)^{6n}$, separate a few cusp (or type
III or IV) fibers (occupying some of the monodromy) for the blow-ups,
partition the rest of the blow-ups to get ${\widetilde {E}}_8$- or
${\widetilde {E}}_6$-fibers (taking $(ab)^5$ or $(ab)^4$ in the
monodromy), blow up the singular points of the cusp (or type III or
IV) fibers sufficiently many times, and apply the method of
Proposition~\ref{prop:twosphereblow}
for the leftover blow-ups.

As examples, we construct spheres in $E(2)\# {\overline {\mathbb
    {CP}}}^2$, in $E(6)\# {\overline {\mathbb {CP}}}^2$, and in
$E(6)\# 3{\overline {\mathbb {CP}}}^2$ using one or more of the
strategies outlined above.

\begin{exa}[Spheres in $E(2)\# {\overline {\mathbb {CP}}}^2$]
\label{exa:k3egy}
  Considering the monodromy of $E(2)$ as $(ab)^5\cdot (ab)^5\cdot (ab)^2$,
  we get an elliptic fibration on the $K3$ surface with three singular
  fibers, two ${\widetilde {E}}_8$ and one type IV. Blowing up the
  singular point of the type IV fiber, we get a tree of spheres with 23
  vertices, 19 of them $(-2)$-spheres, one $(-1)$ and three $(-3)$.
  A simple computation shows the existence of a sphere of self-intersection
  $-92$. (This construction gives a slightly better result than blowing up
  an edge in the tree of spheres giving the $(-86)$-sphere in the $K3$ surface,
  which results a $(-91)$-sphere.)
  \end{exa}

\begin{exa}[Spheres in $E(6)\# {\overline {\mathbb {CP}}}^2$]
\label{exa:e6egy}
  As it was shown in Theorem~\ref{thm:negspheres}, there is a sphere
  of self-intersection $-262$ in $E(6)$; its connected sum with the
  $(-4)$-sphere in ${\overline {\mathbb {CP}}}^2$ gives a
  $(-266)$-sphere. As the $(-262)$-sphere is given by a tree, hence can be
  viewed as smoothing of two spheres  intersecting transversally once, by
  blowing up the intersection point of these spheres we get a
  $(-267)$-sphere in $E(6)\# {\overline {\mathbb {CP}}}^2$.
  Consider now a slightly different construction: take a fibration on
  $E(6)$ with 7 ${\widetilde {E}}_8$-fibers and a cusp (as the total
  monodromy of $E(6)$ is $(ab)^{36}=((ab)^5)^7\cdot (ab)$). Use the seven
  ${\widetilde {E}}_8$-fibers (together with the section)
  to create a $(-258)$-sphere in $E(6)$ intersecting the unique cusp fiber
  in one transverse point. Now summing it with the cuspical cubic in
  ${\overline {\mathbb {CP}}}^2$ we get a $(-9)$-curve, and smoothing
  the intersection with the appropriate orientations we finally receive a
  sphere in $E(6)\# {\overline {\mathbb {CP}}}^2$ of self-intersection
  $-269$.
\end{exa}

\begin{exa}[Spheres in $E(6)\# 3 {\overline {\mathbb {CP}}}^2$]
\label{exa:e6harom}
By presenting the $(-262)$-sphere in
$E(6)$ with a plumbing graph and blowing up edges there, we get a
$(-277)$-sphere in the three-fold blow-up. There is a further
possibility: consider the fibration with 7 ${\widetilde {E}}_8$-fibers
and a cusp fiber, and blow up the cusp three times. Simple calculation
shows that the resulting sphere has self-intersection $-278$. A better
result can be achieved by taking the $(-269)$-sphere of
Example~\ref{exa:e6egy} in $E(6)\# {\overline {\mathbb {CP}}}^2$ and
blow it up twice (as done in Proposition~\ref{prop:twosphereblow}) to
get a sphere with self-intersection $-279$.
\end{exa}
Notice that all the above examples satisfy
Conjecture~\ref{conj:mostnegsphere}; the ratio of the (negative of
the) self-intersection and $b_2$ is significantly less than 5. So far
we found examples with this ratio being close to 5 only after a
significant amount of blow-ups.


\end{document}